%
%


\documentclass[a4paper,reqno]{amsart}

\PII{}
\makeatletter
\def\@setcopyright{\@empty}
\makeatother

\newcommand{\Lp}{L_p[0,2\pi]}
\newcommand{\allp}{1<p<\infty}
\newcommand\seq[2]{\{#1_{#2}\}_{#2=1}^\infty}

\newcommand{\w}{\omega_k(f,t)_p}
\newcommand{\wpar}[1]{\omega_k\left(f,#1\right)_p}
\newcommand{\Np}{N(p,\theta,r,\lambda,\varphi)}

\theoremstyle{plain}
\newtheorem{thm}{Theorem}[subsection]
\newtheorem{lmm}{Lemma}[subsection]

\theoremstyle{remark}
\newtheorem{rem}{Remark}[subsection]
\numberwithin{rem}{thm}
\numberwithin{rem}{lmm}

\newcounter{tempenumi}

\newcounter{const}[subsection]
\numberwithin{const}{thm}
\numberwithin{const}{lmm}

\makeatletter
\newcommand{\Cn}[1][]{%
  \stepcounter{const}C_{\theconst}%
  \@ifnotempty{#1}{\newcounter{#1}\setcounter{#1}{\arabic{const}}}}
\makeatother

\newcommand{\lastC}{C_{\theconst}}
\newcommand{\prevC}[1][1]{%
	{\countdef\n=255
	 \n=\theconst
	 \advance\n by-#1
	 C_{\number\n}}}

\numberwithin{equation}{subsection}

\renewcommand{\theconst}{\arabic{const}}

\begin{document}

\title[On monotone Fourier coefficients of a function\dots]
	{On monotone Fourier coefficients of a function belonging
	 to Nikol'ski\u{\i}--Besov classes}
\author{M.~Q.\ Berisha}
\author{F.~M.\ Berisha}
\address{Faculty of Mathematics and Sciences\\
	University of Prishtina\\
	N\"e\-na Te\-re\-z\"e~5\\
	10000 Prishtina\\
	Kosovo}
\email{faton.berisha@uni-pr.edu}

\keywords{Monotone Fourier coefficients, modulus of smoothness,
	Nikol'ski\u{\i}, Besov, periodic functions,
	best approximations by trigonometric polynomials}
\subjclass{Primary 42A16.}
\date{}

\begin{abstract}
	In this paper,
	necessary and sufficient conditions
	on terms of monotone Fourier coefficients
	for a function to belong to a Nikol'ski\u{\i}--Besov type class
	are given.
\end{abstract}

\maketitle

\subsection{}

Let $f\in\Lp$, $\allp$, be a $2\pi$-periodic function
having a cosine Fourier series with monotone coefficients,
i.e.
\begin{displaymath}
	f(x)\sim\sum_{n=0}^\infty a_n\cos nx, \quad a_n\downarrow0.
\end{displaymath}
and~$\w$ the modulus of smoothness of order~$k$ in~$\Lp$ metrics
of the function $f$, i.e.
\begin{displaymath}
	\w=\sup_{|h|\le t}\|\Delta_h^k f\|_p,
\end{displaymath}
where is
\begin{displaymath}
	\Delta_h^k f(x)=\sum_{\nu=0}^k(-1)^{k-\nu}\binom k\nu f(x+\nu h).
\end{displaymath}

We say that a $2\pi$--periodic function~$f$
belongs to the Nikol'ski\u{\i}--Besov class $\Np$, $\allp$,
if the following conditions are satisfied
\begin{enumerate}
	\item $f\in\Lp$;
	\item\label{it:parameters}Numbers~$\theta$, $r$,~$\lambda$
	  belong to the interval $(0,\infty)$, and~$k$ is an integer
	  satisfying $k>r+\lambda$;
	\item The following inequality holds true
		\begin{displaymath}
			\biggl(
				\int_0^\delta t^{-r\theta-1}\w^\theta\,dt
				+\delta^{\lambda\theta}\int_\delta^1
					t^{-(r+\lambda)\theta-1}\w^\theta\,dt
			\biggr)^{1/\theta}
			\le C\varphi(\delta),
		\end{displaymath}
	\setcounter{tempenumi}{\theenumi}
\end{enumerate}
while the function~$\varphi$ satisfies the conditions
\begin{enumerate}
	\setcounter{enumi}{\thetempenumi}
	\item\label{it:phi-continuous}$\varphi$~is a non-negative
	  continuous function on $(0,1)$ and $\varphi\ne0$;
	\item For every~$\delta_1$, $\delta_2$ such that
		$0\le\delta_1\le\delta_2\le1$ holds
		$\varphi(\delta_1)\le\Cn\varphi(\delta_2)$;
	\item\label{it:phi-2}For every~$\delta$ such that
	  $0\le\delta\le\frac12$
		holds $\varphi(2\delta)\le\Cn\varphi(\delta)$,
\end{enumerate}
where constants%
\footnote{Without mentioning it explicitly,
	we will consider all the constants positive.%
}%
~$C$, $\prevC$ and~$\lastC$
do not depend on~$\delta_1$, $\delta_2$ and~$\delta$.

A more detailed approach to the classes $\Np$
is given in~\cite{lakovic:mat-87}
(see also~\cite[p.~298]{besov-i-n:integralnye}).
In our paper we give the necessary and sufficient condition
in terms of monotone Fourier coefficients
for a function $f\in\Lp$ to belong to the class $\Np$.

\subsection{}

Now we formulate our results.

\begin{thm}\label{th:Np-w}
	A function~$f$ belongs to the class $\Np$
	if and only if%
	\footnote{Here and below we assume that the parameters~$\theta$,
		$r$, $\lambda$ and~$k$
		satisfy the condition~\ref{it:parameters},
		and the function~$\varphi$ satisfies the conditions
		\ref{it:phi-continuous}--\ref{it:phi-2}
		of the definition of the class $\Np$.}
	\begin{multline}\label{eq:Np-w}
		\biggl(
			\sum_{\nu=n+1}^\infty \wpar{\frac1\nu}^\theta \nu^{r\theta-1}
			+n^{-\lambda\theta}\sum_{\nu=1}^n
				\wpar{\frac1\nu}^\theta \nu^{(r+\lambda)\theta-1}
		\biggr)^{1/\theta}\\
		\le C\varphi\left(\frac1n\right),
	\end{multline}
	where constant~$C$ does not depend on~$n$.
\end{thm}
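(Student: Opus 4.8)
The plan is to establish the equivalence between the integral condition (3) in the definition of the class and the discrete sum condition \eqref{eq:Np-w} by showing that the two quantities are comparable up to constants depending only on the fixed parameters. The essential idea is that the modulus of smoothness \w is a non-decreasing function of $t$, so the integrals over dyadic-type intervals can be sandwiched between discrete sums evaluated at the points $t=1/\nu$. First I would set $\delta=1/n$ and split each of the two integrals in condition (3) into a sum of integrals over the intervals $[1/(\nu+1),1/\nu]$. On each such interval the length is comparable to $\nu^{-2}$ and the weight $t^{-r\theta-1}$ (respectively $t^{-(r+\lambda)\theta-1}$) is comparable to $\nu^{r\theta+1}$ (respectively $\nu^{(r+\lambda)\theta+1}$), while monotonicity of $\w$ gives $\wpar{1/(\nu+1)}\le\w\le\wpar{1/\nu}$ throughout the interval. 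Multiplying these comparabilities yields that the integrand contributes a term comparable to $\wpar{1/\nu}^\theta\nu^{r\theta-1}$, and summing recovers exactly the two sums appearing in \eqref{eq:Np-w}.

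The two directions then follow by matching the factor $\delta^{\lambda\theta}=n^{-\lambda\theta}$ in front of the second integral with the factor $n^{-\lambda\theta}$ in front of the second sum. Concretely, I would prove the chain of inequalities
\begin{displaymath}
	\Cn^{-1}\,S_n
	\le
	\int_0^{1/n} t^{-r\theta-1}\w^\theta\,dt
	+n^{-\lambda\theta}\int_{1/n}^1 t^{-(r+\lambda)\theta-1}\w^\theta\,dt
	\le\Cn\,S_n,
\end{displaymath}
where $S_n$ denotes the sum inside the large parentheses of \eqref{eq:Np-w}. Raising to the power $1/\theta$ and using that $C\varphi(1/n)$ is, by conditions \ref{it:phi-continuous}--\ref{it:phi-2}, comparable to $C\varphi(\delta)$ for $\delta\in[1/(n+1),1/n]$ converts the bound on one side into the bound on the other. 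The regularity conditions on $\varphi$—in particular the doubling property \ref{it:phi-2} and the monotonicity-type bound—are exactly what is needed to pass between $\varphi(\delta)$ for a continuum of $\delta$ and $\varphi(1/n)$ for integer $n$.

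The main obstacle I anticipate is handling the boundary behaviour of the first integral near $t=0$ and ensuring the series $\sum_{\nu=n+1}^\infty\wpar{1/\nu}^\theta\nu^{r\theta-1}$ converges and is genuinely comparable to $\int_0^{1/n}t^{-r\theta-1}\w^\theta\,dt$; convergence is not automatic and must be read off from the assumed finiteness of the integral, so the comparison has to be carried out with care so that neither side is declared finite before the other. A secondary technical point is the treatment of the mixed factor $n^{-\lambda\theta}$: in the integral it multiplies $\int_{1/n}^1$, whereas after discretisation the same factor must correctly redistribute so that the inner sum runs over $1\le\nu\le n$ with weight $\nu^{(r+\lambda)\theta-1}$. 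I would verify this redistribution explicitly at the transition index $\nu=n$, where the split point $t=1/n$ lies, since that is where an off-by-one error would most easily slip in. Once these two comparisons are secured, the theorem follows directly from the definition of the class $\Np$ without any further approximation-theoretic input.
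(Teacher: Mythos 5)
Your strategy coincides with the paper's: discretize the two integrals over the intervals $[1/(\nu+1),1/\nu]$, compare them with the sums in \eqref{eq:Np-w}, and use the regularity of $\varphi$ to pass between integer $n$ and continuous $\delta$. However, two of your claims conceal steps that do not follow from what you invoke.

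First, monotonicity of $\w$ alone does not give the two-sided comparability you assert. On $[1/(\nu+1),1/\nu]$ it yields an upper bound with $\wpar{\frac1\nu}^\theta$ but a lower bound only with $\wpar{\frac1{\nu+1}}^\theta$, and replacing the latter by the former requires the standard modulus property $\wpar{2t}\le 2^k\w$, i.e.\ $\wpar{\frac1\nu}\le 2^k\wpar{\frac1{\nu+1}}$; this is exactly what the paper invokes (``properties of the modulus of smoothness,'' citing Timan) in its lower estimates for $I_1$ and $I_2$. An index shift $\mu=\nu+1$ rescues the infinite sum without this property (your anchor $\delta=1/n$ even makes that clean), but the finite sum $\sum_{\nu=1}^n\wpar{\frac1\nu}^\theta\nu^{(r+\lambda)\theta-1}$ then still loses its $\nu=1$ term, which cannot be recovered from monotonicity alone. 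So the doubling property of the modulus must be cited; it is not cosmetic.

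Second, and more seriously: in the direction ``\eqref{eq:Np-w} implies $f\in\Np$,'' membership in the class demands the integral inequality for \emph{every} $\delta\in(0,1)$, whereas your sandwich controls the integral expression only at the grid points $\delta=1/n$. The comparability of $\varphi(\delta)$ with $\varphi(1/n)$ — which is all you invoke for this passage — handles only the right-hand sides; you must still show that the left-hand side at an arbitrary $\delta$ with $\frac1{n+1}<\delta\le\frac1n$ is dominated by its value at the grid point, or directly by the sum $S_n$, and the regularity of $\varphi$ says nothing about that. The paper fills precisely this gap by splitting $\int_0^\delta=\int_0^{1/(n+1)}+\int_{1/(n+1)}^\delta$ and absorbing the middle piece into the second integral via the pointwise inequality $t^{-r\theta-1}\le\delta^{\lambda\theta}t^{-(r+\lambda)\theta-1}$ for $t\le\delta$. (Alternatively, one can check that $\delta\mapsto\int_0^\delta t^{-r\theta-1}\w^\theta\,dt+\delta^{\lambda\theta}\int_\delta^1 t^{-(r+\lambda)\theta-1}\w^\theta\,dt$ is non-decreasing: the boundary terms in the derivative cancel, leaving $\lambda\theta\delta^{\lambda\theta-1}\int_\delta^1 t^{-(r+\lambda)\theta-1}\w^\theta\,dt\ge0$.) Without some such step your argument proves the class inequality only for $\delta$ of the form $1/n$, which is not what $f\in\Np$ requires.
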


\begin{thm}\label{th:Np-monotone}
	For a function $f\in\Lp$, $\allp$,
	such that
	\begin{equation}\label{eq:f}
		f(x)\sim\sum_{\nu=1}^\infty a_\nu\cos\nu x, \quad a_\nu\downarrow0,
	\end{equation}
	to belong to the class $\Np$
	it is necessary and sufficient
	that its Fourier coefficients satisfy the condition
	\begin{displaymath}
		\biggl(
			\sum_{\nu=n+1}^\infty a_\nu^\theta \nu^{r\theta+\theta-\theta/p-1}
			+n^{-\lambda\theta}\sum_{\nu=1}^n a_\nu^\theta
				\nu^{r\theta+\lambda\theta+\theta-\theta/p-1}
		\biggr)^{1/\theta}
		\le C\varphi\left(\frac 1n\right),
	\end{displaymath}
	where constant~$C$ does not depend on~$n$.
\end{thm}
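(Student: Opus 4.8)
The plan is to reduce Theorem~\ref{th:Np-monotone} to Theorem~\ref{th:Np-w}, so the entire burden lies in relating the modulus of smoothness $\w$ to the monotone Fourier coefficients $a_\nu$. The key is a two-sided estimate of the form
\begin{equation*}
	\Cn \Biggl(\sum_{\nu=n+1}^\infty a_\nu^2\nu^{2k}\cdot\frac1{n^{2k}}
		+ \sum_{\nu=1}^n a_\nu^2\Biggr)^{1/2}
	\ \lesssim\ \wpar{\tfrac1n}
	\ \lesssim\ \text{(similar expression)},
\end{equation*}
but since we are in $\Lp$ rather than $L_2$ and dealing with monotone coefficients, the right tool is the classical result (due to Boas, and to Konyushkov for the $p=2$ case, later extended for moduli of smoothness) that for $a_\nu\downarrow0$ the $L_p$-norm behaviors of $f$, its approximation errors, and its moduli of smoothness are all comparable to weighted sums of the $a_\nu$. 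Concretely, the workhorse estimate I would invoke or prove is
\begin{equation*}
	\wpar{\tfrac1n}^p \ \asymp\ n^{-kp}\sum_{\nu=1}^n \nu^{kp}a_\nu^p\,\nu^{p-2}
		\ +\ \sum_{\nu=n+1}^\infty a_\nu^p\,\nu^{p-2},
\end{equation*}
or more robustly its power-$\theta$ analogue after summation, where the exponent $p-2$ arises from the standard $\|\cos\nu x\|_p$-type normalization for lacunary-like monotone series. The precise form producing the exponent $r\theta+\theta-\theta/p-1$ in the statement must come from matching $\w\asymp \nu^{-k}(\sum \dots)$ against condition~\eqref{eq:Np-w}.

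First I would establish the monotone-coefficient estimate for a single modulus value, namely bounds of the type $\wpar{1/n}\asymp n^{-k}\bigl(\sum_{\nu\le n}\nu^{k-1+1-1/p}\,a_\nu\cdots\bigr)$; here the cleanest route is to split $f=\sum_{\nu\le n}+\sum_{\nu>n}$, apply $\|\Delta_h^k(\cos\nu\,\cdot)\|_p\le \min(2^k,|h\nu|^k)\|\cos\nu\cdot\|_p$ to each block with $|h|\le 1/n$, and then use the monotonicity $a_\nu\downarrow0$ together with a Hardy--Littlewood type inequality for monotone sequences to convert the resulting sums into the clean weighted form. The factor $\theta^{-1/p}$ and the exponent $-\theta/p$ track exactly the $L_p$-normalization $\|\cos\nu x\|_p\asymp 1$ but with the Hausdorff--Young / Hardy--Littlewood correction $\bigl(\sum a_\nu^p\nu^{p-2}\bigr)^{1/p}\asymp\|f\|_p$ for monotone coefficients, which is the source of the $p-2$ exponent.

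Next I would substitute this two-sided comparison into the left-hand side of~\eqref{eq:Np-w}. Because both inner sums in~\eqref{eq:Np-w} are $\ell^\theta$-type weighted sums of $\wpar{1/\nu}$, and the modulus comparison is itself a weighted $\ell^p$ sum of the $a_\mu$, the composition yields a double sum; interchanging the order of summation and collapsing the inner geometric-type sums (using that the weights $\nu^{r\theta-1}$ and $\nu^{(r+\lambda)\theta-1}$ sum up or tail-off at the rate dictated by the outer exponent) is the mechanical heart of the argument. The monotonicity $a_\nu\downarrow0$ is what guarantees these interchanges produce matching two-sided bounds rather than only one-sided ones, and the discrete Hardy inequality in both its direct and dual forms handles the passage between $\sum_{\nu>n}$ tails and $\sum_{\nu\le n}$ partial sums.

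The main obstacle I anticipate is the bookkeeping in the interchange-and-collapse step, specifically verifying that after swapping summation order the accumulated weight on each $a_\mu^\theta$ is exactly $\mu^{r\theta+\theta-\theta/p-1}$ in the tail and $\mu^{r\theta+\lambda\theta+\theta-\theta/p-1}$ in the head, with the $n^{-\lambda\theta}$ prefactor correctly preserved. This requires careful use of the two summation-by-parts (Hardy-type) lemmas at the transition index $\nu=n$, and the $\theta\neq p$ case is genuinely more delicate than $\theta=p$ because one cannot simply compare $\ell^\theta$ and $\ell^p$ norms freely; the monotonicity of $a_\nu$ must be exploited to control the cross terms. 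Once these weighted sums are shown to match on both sides, Theorem~\ref{th:Np-w} converts condition~\eqref{eq:Np-w} into the coefficient condition of the present theorem, completing the proof in both directions simultaneously.
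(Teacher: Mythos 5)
Your plan coincides with the paper's own proof: reduce to Theorem~\ref{th:Np-w}, invoke the two-sided comparison $\wpar{1/n}^p\asymp n^{-kp}\sum_{\nu\le n}a_\nu^p\nu^{(k+1)p-2}+\sum_{\nu>n}a_\nu^p\nu^{p-2}$ (this is exactly the paper's Lemma~\ref{lm:anu-w}, quoted there from the literature rather than re-proved), and then collapse the resulting double sums using direct and converse Hardy-type inequalities for monotone sequences (the paper's Lemmas \ref{lm:lp}, \ref{lm:lp-converse} and~\ref{lm:lp-complete}), with monotonicity supplying the two-sidedness just as you say. The bookkeeping you flag as the main obstacle --- including the case split on $\theta/p$ handled via Jensen's and H\"older's inequalities --- is indeed the bulk of the paper's argument, but your identification of the workhorse estimate and of the collapsing tools is exactly the one used there.
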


\begin{rem}
	Put $\varphi(\delta)=\delta^\alpha$, $0<\alpha<\lambda$,
	in the definition of the class $\Np$,
	we obtain~\cite{lakovic:mat-87}
	the Nikol'ski\u{\i} class~$H_p^{r+\alpha}$.
	Thus Theorems~\ref{th:Np-w} and~\ref{th:Np-monotone}
	give the single coefficient condition
	\begin{displaymath}
		a_\nu\le\frac C{\nu^{r+\alpha+1-\frac1p}},
	\end{displaymath}
	for $f\in H_p^{r+\alpha}$,
	given in~\cite{konyushkov:izv-57}
	(see also~\cite{berisha:glas-81}),
	where the function~$f$ is given by~\eqref{eq:f}.
\end{rem}

\begin{rem}
	If $\varphi(\delta)\ge C$,
	then we obtain~\cite{lakovic:mat-87}
	the Besov class~$B_p^{\theta r}$.
	Thus Theorems~\ref{th:Np-w} and~\ref{th:Np-monotone}
	give the necessary and sufficient condition
	\begin{displaymath}
		\sum_{\nu=1}^\infty a_\nu^\theta \nu^{r\theta+\theta-\theta/p-1}
		<\infty
	\end{displaymath}
	for $f\in B_p^{\theta r}$,
	given in~\cite{potapov-b:publ-79}
	(see also~\cite{berisha:serdica-85}),
	where the function~$f$ is given by~\eqref{eq:f}.
\end{rem}

\subsection{}

In order to establish our results,
we use the following lemmas.

\begin{lmm}\label{lm:jensen}
	Let $0<\alpha<\beta<\infty$ and $a_\nu\ge0$. The following
	inequality holds true
	\begin{displaymath}
		\biggl(\sum_{\nu=1}^n a_\nu^\beta\biggr)^{1/\beta}
		\le\biggl(\sum_{\nu=1}^n a_\nu^\alpha\biggr)^{1/\alpha}.
	\end{displaymath}
\end{lmm}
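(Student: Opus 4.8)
The plan is to reduce the inequality to the elementary fact that the map $x\mapsto x^s$ lies below the identity on $[0,1]$ whenever $s\ge1$. First I would dispose of the degenerate case: if all the $a_\nu$ vanish, both sides equal zero and there is nothing to prove, so I may assume $S:=\sum_{\nu=1}^n a_\nu^\alpha>0$.

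Next I normalize. Setting $b_\nu=a_\nu^\alpha/S$, I obtain numbers satisfying $0\le b_\nu\le1$ and $\sum_{\nu=1}^n b_\nu=1$. Since $\beta/\alpha>1$ by the hypothesis $\alpha<\beta$, the pointwise inequality $b_\nu^{\beta/\alpha}\le b_\nu$ holds for every $\nu$, because a number in $[0,1]$ does not increase when raised to a power exceeding one. Summing over $\nu$ gives $\sum_{\nu=1}^n b_\nu^{\beta/\alpha}\le\sum_{\nu=1}^n b_\nu=1$.

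Finally I would unwind the normalization. Since $b_\nu^{\beta/\alpha}=a_\nu^\beta/S^{\beta/\alpha}$, the last estimate reads $\sum_{\nu=1}^n a_\nu^\beta\le S^{\beta/\alpha}=\left(\sum_{\nu=1}^n a_\nu^\alpha\right)^{\beta/\alpha}$, and raising both sides to the power $1/\beta$ yields exactly the asserted inequality.

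There is no real obstacle here; the single point deserving a word of justification is the pointwise bound $b_\nu^{\beta/\alpha}\le b_\nu$, which rests only on $0\le b_\nu\le1$ together with $\beta/\alpha\ge1$, both of which are secured by the normalization and the assumption $\alpha<\beta$. The statement is, of course, the familiar nesting of the discrete norms $\|\cdot\|_\beta\le\|\cdot\|_\alpha$ for $\alpha<\beta$, and the argument sketched above is simply its self-contained verification.
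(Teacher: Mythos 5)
Your proof is correct and complete: the normalization $b_\nu=a_\nu^\alpha/S$, the pointwise bound $b_\nu^{\beta/\alpha}\le b_\nu$ on $[0,1]$, and the unwinding step are all valid, and the degenerate case is properly handled. The paper itself gives no argument at all for this lemma --- it simply cites Jensen via Hardy--Littlewood--P\'olya --- and your normalization argument is precisely the standard proof found there, so you have in effect supplied the self-contained verification the paper delegates to the reference.
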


Proof of the lemma
is due to Jensen~\cite[p.~43]{hardy-l-p:inequalities}.

\begin{lmm}\label{lm:lp}
	Let $\seq a\nu$ be a sequence of non-negative numbers, $\alpha>0$,
	$\lambda$ a real number,
	$m$ and~$n$ positive integers such that $m<n$.
	Then
	\begin{enumerate}
	\item for $1\le p<\infty$ the following equalities hold
		\begin{displaymath}
			\sum_{\mu=m}^n \mu^{\alpha-1}
				\biggl(\sum_{\nu=\mu}^n a_\nu \nu^\lambda\biggr)^p
			\le\Cn\sum_{\mu=m}^n \mu^{\alpha-1}(a_\mu \mu^{\lambda+1})^p,
		\end{displaymath}
		\begin{displaymath}
			\sum_{\mu=m}^n \mu^{-\alpha-1}
				\biggl(\sum_{\nu=m}^\mu a_\nu \nu^\lambda\biggr)^p
			\le\Cn\sum_{\mu=m}^n \mu^{-\alpha-1}(a_\mu \mu^{\lambda+1})^p;
		\end{displaymath}
	\item for $0<p\le1$ the following equalities hold
		\begin{displaymath}
			\sum_{\mu=m}^n \mu^{\alpha-1}
				\biggl(\sum_{\nu=\mu}^n a_\nu \nu^\lambda\biggr)^p
			\ge\Cn\sum_{\mu=m}^n \mu^{\alpha-1}(a_\mu \mu^{\lambda+1})^p,
		\end{displaymath}
		\begin{displaymath}
			\sum_{\mu=m}^n \mu^{-\alpha-1}
				\biggl(\sum_{\nu=m}^\mu a_\nu \nu^\lambda\biggr)^p
			\ge\Cn\sum_{\mu=m}^n \mu^{-\alpha-1}(a_\mu \mu^{\lambda+1})^p,
		\end{displaymath}
	\end{enumerate}
	where constants~$\prevC[3]$, $\prevC[2]$, $\prevC$ and~$\lastC$
	depend only on numbers~$\alpha$, $\lambda$ and~$p$,
	and do not depend on~$m$, $n$
	as well as on the sequence $\seq a\nu$.
\end{lmm}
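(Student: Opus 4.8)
The plan is to read all four estimates as weighted discrete Hardy--Copson inequalities and to establish the two inequalities of part~(1) in full; the two inequalities of part~(2) then follow by the same scheme with the roles of the elementary convexity inequalities reversed, and within each part the second inequality is obtained from the first by reflecting the order of summation. In every case the engine is the same three-step device: estimate the inner sum, interchange the order of summation, and evaluate an elementary power sum using the hypothesis $\alpha>0$.

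For the first inequality of~(1), with $1\le p<\infty$, I would insert the factors $\nu^\beta\nu^{-\beta}$ into $\sum_{\nu=\mu}^n a_\nu\nu^\lambda$ and apply Hölder's inequality with exponents $p$ and $p'=p/(p-1)$, choosing the splitting parameter $\beta$ in the range $\frac{p-1}{p}<\beta<\frac{p-1}{p}+\frac{\alpha}{p}$, which is non-empty precisely because $\alpha>0$. The lower bound $\beta p'>1$ makes the tail $\sum_{\nu\ge\mu}\nu^{-\beta p'}$ comparable to $\mu^{1-\beta p'}$, so that $\left(\sum_{\nu=\mu}^n a_\nu\nu^\lambda\right)^p\le C\mu^{(1-\beta p')(p-1)}\sum_{\nu=\mu}^n a_\nu^p\nu^{(\lambda+\beta)p}$. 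Multiplying by $\mu^{\alpha-1}$, summing over $\mu$, and interchanging the order of summation via $\sum_{\mu=m}^n\sum_{\nu=\mu}^n=\sum_{\nu=m}^n\sum_{\mu=m}^\nu$ reduces everything to the elementary estimate $\sum_{\mu=m}^\nu\mu^{s-1}\le C\nu^s$ with $s=\alpha+(1-\beta p')(p-1)>0$, whose positivity is exactly the upper bound imposed on $\beta$. A short computation shows the accumulated exponent of $\nu$ collapses to $\alpha-1+(\lambda+1)p$, that is, precisely the summand on the right. For $p=1$ the argument degenerates to the bare interchange of summation. The second inequality of~(1) is identical except that one chooses $\frac{p-1}{p}-\frac{\alpha}{p}<\beta<\frac{p-1}{p}$, interchanges via $\sum_{\mu=m}^n\sum_{\nu=m}^\mu=\sum_{\nu=m}^n\sum_{\mu=\nu}^n$, and finishes with $\sum_{\mu=\nu}^n\mu^{-s-1}\le C\nu^{-s}$, $s>0$.

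The main obstacle is part~(2), the range $0<p\le1$, where both Hölder's inequality and the subadditivity $\left(\sum t_\nu\right)^p\le\sum t_\nu^p$ point in the wrong direction and so cannot be used to bound the inner sums from below. Here I would instead work with the tail sums $C_\mu=\sum_{\nu=\mu}^n a_\nu\nu^\lambda$, telescope $C_\mu^p=\sum_{j\ge\mu}\bigl(C_j^p-C_{j+1}^p\bigr)$, and use the concavity (tangent-line) inequality $C_j^p-C_{j+1}^p\ge p\,C_j^{p-1}\bigl(C_j-C_{j+1}\bigr)$ together with a summation by parts; after this the interchange of summation and the same power-sum estimate $\sum_{\mu=m}^j\mu^{\alpha-1}\asymp j^\alpha$ reassemble the right-hand side. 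The delicate point, and the step I expect to cost the most, is controlling these increments and the partial weight sums near the lower endpoint $\mu=m$, where the inequality is tightest; this is exactly where the hypothesis $\alpha>0$ and the comparability constants of the power sums must be exploited sharply.
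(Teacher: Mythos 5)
Your part~(1) is correct and complete: the splitting $\nu^\beta\nu^{-\beta}$, H\"older with exponents $p,p'$, the window $\frac{p-1}p<\beta<\frac{p-1}p+\frac\alpha p$ (resp.\ its mirror image for the second inequality), the interchange of summation and the elementary power-sum bounds all check out, and the exponents do collapse to $\alpha-1+(\lambda+1)p$. This is in substance the classical Hardy--Copson argument; note the paper gives no proof of its own here, it only cites Hardy--Littlewood--P\'olya.

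Part~(2) is where the proposal fails, in two distinct ways. First, the telescoping-plus-concavity scheme cannot be closed: after $C_j^p-C_{j+1}^p\ge p\,C_j^{p-1}a_jj^\lambda$ and summation by parts, reassembling the right-hand side term by term forces $C_j^{p-1}\ge c\,(a_jj^{\lambda+1})^{p-1}$, i.e.\ (the exponent $p-1$ being negative) the pointwise bound $C_j\le C\,a_jj^{\lambda+1}$, which is false for general non-negative sequences (take $a_j=0$ with nonzero later terms); the workable route for such reversed inequalities is a block decomposition combined with the power-mean inequality $\bigl(\sum_{i=1}^N x_i\bigr)^p\ge N^{p-1}\sum_{i=1}^N x_i^p$, which is what converts the \emph{length} of the available summation range into the extra factor $\mu^p$. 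Second, and decisively, the endpoint difficulty you flagged as ``delicate'' is not a technical obstacle but an actual failure of the statement as printed: for $p=1$ your own interchange shows the first inequality of~(2) is equivalent to $\sum_{\mu=m}^\nu\mu^{\alpha-1}\ge C\nu^\alpha$ for every $\nu\in[m,n]$, which fails at $\nu=m$ (it reads $m^{\alpha-1}\ge Cm^\alpha$); for general $0<p\le1$ the sequence with $a_m=1$ and $a_\nu=0$ otherwise gives left side $m^{\alpha-1+\lambda p}$ against right side $Cm^{\alpha-1+\lambda p+p}$, forcing $C\le m^{-p}$, and dually a sequence concentrated at $\nu=n$ kills the second inequality. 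So no proof can succeed with constants independent of $m$ and $n$: the lower bounds require either an index shift on the smaller side (compare the $4m$, $8m$ shifts built into Lemma~\ref{lm:lp-converse}, or a cutoff at $n/2$ for the second inequality), or $m=1$ (first inequality) / an outer sum extending beyond $n$ (second inequality), or monotonicity as in Lemma~\ref{lm:lp-complete}. That is also the form in which the classical results cited by the paper (series from $1$ to $\infty$) are actually true, and the only form in which the paper's own applications use them.
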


Proof of the lemma is given in~\cite[p.~308]{hardy-l-p:inequalities}.

We write $a_\nu\downarrow$
if $\seq a\nu$ is a monotone--decreasing sequence of non-negative numbers,
i.e.\ if $a_\nu\ge a_{\nu+1}\ge0$ $(\nu=1,2,\dotsc)$.

\begin{lmm}\label{lm:lp-converse}
	Let $a_\nu\downarrow$, $\alpha>0$, $\lambda$ a real number,
	$m$ and~$n$ positive integers.
	Then
	\begin{enumerate}
	\item for $1\le p<\infty$, $n\ge16m$ the following equalities hold
		\begin{displaymath}
			\sum_{\mu=m}^n \mu^{\alpha-1}
				\biggl(\sum_{\nu=\mu}^n a_\nu \nu^\lambda\biggr)^p
			\ge\Cn\sum_{\mu=8m}^n \mu^{\alpha-1}(a_\mu \mu^{\lambda+1})^p,
		\end{displaymath}
		\begin{displaymath}
			\sum_{\mu=m}^n \mu^{-\alpha-1}
				\biggl(\sum_{\nu=m}^\mu a_\nu \nu^\lambda\biggr)^p
			\ge\Cn\sum_{\mu=4m}^n \mu^{-\alpha-1}(a_\mu \mu^{\lambda+1})^p;
		\end{displaymath}
	\item for $0<p\le1$, $n\ge4m$ the following equalities hold
		\begin{displaymath}
			\sum_{\mu=4m}^n \mu^{\alpha-1}
				\biggl(\sum_{\nu=\mu}^n a_\nu \nu^\lambda\biggr)^p
			\le\Cn\sum_{\mu=m}^n \mu^{\alpha-1}(a_\mu \mu^{\lambda+1})^p,
		\end{displaymath}
		\begin{displaymath}
			\sum_{\mu=4m}^n \mu^{-\alpha-1}
				\biggl(\sum_{\nu=4m}^\mu a_\nu \nu^\lambda\biggr)^p
			\le\Cn\sum_{\mu=m}^n \mu^{-\alpha-1}(a_\mu \mu^{\lambda+1})^p,
		\end{displaymath}
	\end{enumerate}
	where constants~$\prevC[3]$, $\prevC[2]$, $\prevC$ and~$\lastC$
	depend only on numbers~$\alpha$, $\lambda$ and~$p$,
	and do not depend on~$m$, $n$
	as well as on the sequence $\seq a\nu$.
\end{lmm}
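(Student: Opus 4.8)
The four inequalities split into two dual pairs, and within each pair the two members are handled symmetrically (the tail sum $\sum_{\nu=\mu}^n$ versus the head sum $\sum_{\nu=m}^\mu$, and the weight $\mu^{\alpha-1}$ versus $\mu^{-\alpha-1}$), so I would concentrate on the first member of each part. The common engine is a block estimate: for $a_\nu\downarrow$ and integers $1\le s\le t\le 2s$, monotonicity gives $a_t\sum_{\nu=s}^t\nu^\lambda\le\sum_{\nu=s}^t a_\nu\nu^\lambda\le a_s\sum_{\nu=s}^t\nu^\lambda$, while on such a block $\nu^\lambda$ is comparable to $s^\lambda$; when moreover the length $t-s$ is comparable to $s$, the middle sum is comparable to $a_s\,s^{\lambda+1}$ (or $a_t\,s^{\lambda+1}$), with constants depending only on $\lambda$. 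This is the only place monotonicity enters, and it is exactly what converts a single coefficient $a_\mu\mu^{\lambda+1}$ into a piece of one of the inner sums.

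For part~(1), $1\le p<\infty$, I want a lower bound, so it suffices to retain one well-chosen block. Fixing $\mu\ge8m$ and putting $\mu'=\lfloor\mu/2\rfloor$, the inner sum that starts at $\mu'$ dominates $\sum_{\nu=\mu'}^{\mu}a_\nu\nu^\lambda$, which by the block estimate is at least $C\,a_\mu\mu^{\lambda+1}$; since $\mu'$ is comparable to $\mu$, the weight $(\mu')^{\alpha-1}$ is comparable to $\mu^{\alpha-1}$. Thus every summand on the right is bounded by a constant times a summand on the left indexed by $\mu'=\lfloor\mu/2\rfloor$, and the map $\mu\mapsto\lfloor\mu/2\rfloor$ is at most two-to-one with image inside $[m,n]$; summing and absorbing the factor $2$ into the constant gives the claim. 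The second inequality of part~(1) is the same computation with the block $[\lceil\mu/2\rceil,\mu]$ taken inside the head sum $\sum_{\nu=m}^\mu$, where the correspondence is the identity on the index $\mu$, which is why the summation there may start already at $4m$. The hypothesis $n\ge16m$ only serves to keep all shifted indices inside $[m,n]$ and the target ranges nonempty.

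For part~(2), $0<p\le1$, I want an upper bound on an entire tail, and here a single block is not enough: one must sum all of them, which is where $p\le1$ enters through the subadditivity $\bigl(\sum c_\nu\bigr)^p\le\sum c_\nu^{\,p}$. I would decompose over the \emph{absolute} dyadic blocks $\Delta_k=\{2^k\le\nu<2^{k+1}\}$ rather than blocks relative to $\mu$: on $\Delta_k$ monotonicity gives $a_\nu\le a_{2^k}$ and $\nu^\lambda$ is comparable to $2^{k\lambda}$, so the $\Delta_k$-part of the inner sum is at most $C\,b_k$ with $b_k=a_{2^k}2^{k(\lambda+1)}$. Applying subadditivity, $\bigl(\sum_{\nu=\mu}^n a_\nu\nu^\lambda\bigr)^p\le C\sum_{k\ge k_0}b_k^{\,p}$ for $\mu\in\Delta_{k_0}$; grouping the outer variable by the block it lies in, summing the weight $\mu^{\alpha-1}$ over $\Delta_{k_0}$ (which contributes a factor comparable to $2^{k_0\alpha}$), and interchanging the order of the resulting double dyadic sum leaves $\sum_k b_k^{\,p}\sum_{k_0\le k}2^{k_0\alpha}$. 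Since $\alpha>0$ the inner geometric series is comparable to $2^{k\alpha}$, so the whole expression is at most $C\sum_k 2^{k\alpha}b_k^{\,p}$, which is precisely the dyadic regrouping of the right-hand side, the one-step discrepancy $a_{2^{k+1}}\le a_{2^k}$ being harmless by monotonicity. The dual inequality is identical, with the $\Delta_k$-blocks taken inside $[4m,\mu]$ and the convergent geometric series now $\sum_{k_0\ge k}2^{-k_0\alpha}$.

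The main obstacle is the whole of part~(2): naive subadditivity applied to the tail, without monotonicity, overshoots by a factor comparable to $\nu^{1-p}$ and cannot yield the stated bound, so the dyadic blocking is essential. One must also be careful to use absolute dyadic points, since blocking relative to $\mu$ forces an awkward divisibility-constrained reindexing, and to sum the geometric series in the convergent direction, which is exactly what the sign of $\alpha$ together with the shifted summation limits guarantees.
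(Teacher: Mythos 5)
The paper itself contains no proof of this lemma --- it is stated and then deferred entirely to the reference \cite{berisha:lp-conv} --- so there is no internal argument to compare yours against; what can be said is that your proof is correct and complete. Part~(1) is sound: the single-block estimate $\sum_{\nu=\lfloor\mu/2\rfloor}^{\mu}a_\nu\nu^\lambda\ge c\,a_\mu\mu^{\lambda+1}$ (valid with $c=c(\lambda)$ since the block has length comparable to $\mu$ and $\nu^\lambda\asymp\mu^\lambda$ on it), combined with the at-most-two-to-one reindexing $\mu\mapsto\lfloor\mu/2\rfloor$ for the tail sums and the identity map for the head sums, does give the stated lower bounds, and the shifted indices stay inside $[m,n]$ precisely because the right-hand ranges start at $8m$ and $4m$. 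Part~(2) is also sound: $p$-subadditivity applied to the decomposition of the inner sum over absolute dyadic blocks gives $\bigl(\sum_{\nu=\mu}^{n}a_\nu\nu^\lambda\bigr)^p\le C\sum_{k\ge k_0}b_k^p$ with $b_k=a_{2^k}2^{k(\lambda+1)}$, the outer sum contributes the geometric factor $\sum_{k_0\le k}2^{k_0\alpha}\asymp 2^{k\alpha}$ (convergent in the right direction since $\alpha>0$), and the resulting dyadic sum $\sum_k 2^{k\alpha}b_k^p$ is dominated by the right-hand side because each block $\Delta_{k-1}$ occurring in it lies wholly inside $[m,n]$ --- the index-shift bookkeeping that your $4m$ offset guarantees; the dual inequality with weight $\mu^{-\alpha-1}$ goes through verbatim. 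Your closing observation is also apt: termwise subadditivity alone loses a factor $2^{k(1-p)}$ per block, so the dyadic grouping, which exploits monotonicity to replace a block of coefficients by its largest one, is genuinely needed and is the heart of the matter.
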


Proof of the lemma is given in~\cite{berisha:lp-conv}.

\begin{lmm}\label{lm:lp-complete}
	Let $a_\nu\downarrow$, $\alpha>0$, $\lambda$ a real number,
	$m$ and~$n$ positive integers.
	For $0<p<\infty$ the following inequalities hold
	\begin{displaymath}
		\Cn\sum_{\mu=1}^n \mu^{\alpha-1}(a_\mu \mu^{\lambda+1})^p
		\le\sum_{\mu=1}^n \mu^{\alpha-1}
			\biggl(\sum_{\nu=\mu}^n a_\nu \nu^\lambda\biggr)^p
		\le\Cn\sum_{\mu=1}^n \mu^{\alpha-1}(a_\mu \mu^{\lambda+1})^p,
	\end{displaymath}
	\begin{displaymath}
		\Cn\sum_{\mu=1}^n \mu^{-\alpha-1}(a_\mu \mu^{\lambda+1})^p
		\le\sum_{\mu=1}^n \mu^{-\alpha-1}
			\biggl(\sum_{\nu=1}^\mu a_\nu \nu^\lambda\biggr)^p
		\le\Cn\sum_{\mu=1}^n \mu^{-\alpha-1}(a_\mu \mu^{\lambda+1})^p,
	\end{displaymath}
	where constants~$\prevC[3]$, $\prevC[2]$, $\prevC$ and~$\lastC$
	depend only on numbers~$\alpha$, $\lambda$ and~$p$,
	and do not depend on~$m$, $n$
	as well as on the sequence $\seq a\nu$.
\end{lmm}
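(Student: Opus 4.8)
The plan is to read the two asserted two–sided estimates as saying that, for a monotone sequence, each of the Hardy transforms $\mu\mapsto\sum_{\nu=\mu}^n a_\nu\nu^\lambda$ and $\mu\mapsto\sum_{\nu=1}^\mu a_\nu\nu^\lambda$ is, after weighting, equivalent to the diagonal sequence $a_\mu\mu^{\lambda+1}$. Each of the four required inequalities then splits into an ``easy'' half and a ``hard'' half, and the easy halves are exactly Lemma~\ref{lm:lp}. For $1\le p<\infty$ its part~(1), applied with $m=1$, gives at once the upper bounds $\sum_\mu\mu^{\alpha-1}(\cdots)^p\le C\sum_\mu\mu^{\alpha-1}(a_\mu\mu^{\lambda+1})^p$ and the $\mu^{-\alpha-1}$ analogue; for $0<p\le1$ its part~(2), again with $m=1$, gives the corresponding lower bounds. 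No monotonicity is needed for these halves.

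The hard halves---the lower bounds when $p\ge1$ and the upper bounds when $p\le1$---are supplied by Lemma~\ref{lm:lp-converse}, which is where the hypothesis $a_\nu\downarrow$ enters. Taking $m=1$ there, however, produces two mismatches with the clean form needed here: the ranges of summation are shifted (the conclusions start at $\mu=8$ or $\mu=4$, and one inner sum begins at $\nu=4$ rather than at $1$), and the lemma is only available for $n$ large ($n\ge16$, resp.\ $n\ge4$). So after invoking Lemma~\ref{lm:lp-converse} I will be left, in every case, with a fixed finite block of boundary indices ($\mu\le7$ or $\mu\le3$) and with the separate small-$n$ regime to absorb; the whole content of the proof is to show that these contribute only bounded factors.

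I would dispose of the boundary block by elementary, scale-free estimates. For the $p\ge1$ lower bounds I use the trivial one-term minorant $\sum_{\nu=\mu}^n a_\nu\nu^\lambda\ge a_\mu\mu^\lambda$ (and likewise $\sum_{\nu=1}^\mu a_\nu\nu^\lambda\ge a_\mu\mu^\lambda$), which turns a boundary term into $\mu^{-p}\mu^{\alpha-1}(a_\mu\mu^{\lambda+1})^p$; since $\mu$ ranges over a fixed finite set, $\mu^{-p}$ is bounded below, and adding this minorant to the estimate from Lemma~\ref{lm:lp-converse} restores the full sum $\sum_{\mu=1}^n$ on the right, the two index ranges being complementary. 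For the $p\le1$ upper bounds I use instead the subadditivity $(x+y)^p\le x^p+y^p$ to peel off the finitely many initial summands: this both replaces a shifted inner sum $\sum_{\nu=4}^\mu$ by $\sum_{\nu=1}^\mu$ at the cost of an additive constant $T^p$ with $T=a_1+a_2 2^\lambda+a_3 3^\lambda$, and, via $\sum_{\nu=\mu}^n=(\text{finite block})+\sum_{\nu=4}^n$, reduces the missing outer terms $\mu\le3$ to the already-bounded $\mu=4$ term plus single diagonal terms; since $\sum_\mu\mu^{-\alpha-1}<\infty$ for $\alpha>0$, every such constant is dominated by the first few diagonal terms of the right-hand side. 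Finally, the small-$n$ regime ($n<16$, resp.\ $n<4$) is a sum of boundedly many terms, on which the same one-term minorant (resp.\ subadditivity) yields the equivalence with constants depending only on the fixed bound.

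The main obstacle is thus not the Hardy-type inequalities themselves, which are quoted, but the bookkeeping needed to upgrade the shifted, large-$n$ conclusions of Lemma~\ref{lm:lp-converse} to sums running from $\mu=1$ for every $n$, while keeping all constants independent of $n$ and of the sequence. The one delicate point is to verify that the boundary contributions really are comparable to diagonal terms---that factors such as $\mu^{-p}$ or $\mu^{1-\alpha-p}$ stay bounded on the relevant finite index sets---and that the lower-bound pieces may simply be added rather than matched term by term, which is legitimate precisely because both pieces minorize the same non-negative left-hand side.
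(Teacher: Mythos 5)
Your proof is correct, but it cannot be compared step-by-step with a proof in the paper, because the paper gives none: for Lemma~\ref{lm:lp-complete} it simply states that the lemma ``is also proved in''~\cite{berisha:lp-conv}, the same external reference that covers Lemma~\ref{lm:lp-converse}. What you have done instead is derive the lemma inside the paper's own framework, as a corollary of the two quoted Hardy-type lemmas: the ``easy'' halves (upper bounds for $1\le p<\infty$, lower bounds for $0<p\le1$) come from Lemma~\ref{lm:lp} with $m=1$ and need no monotonicity, while the ``hard'' halves come from Lemma~\ref{lm:lp-converse} with $m=1$, after which the shifted ranges ($\mu\ge8$ or $\mu\ge4$, inner sums from $\nu=4$) and the excluded small-$n$ cases must be repaired. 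Your repairs are sound: for $p\ge1$ the one-term minorant $\sum_{\nu=\mu}^n a_\nu\nu^\lambda\ge a_\mu\mu^\lambda$ costs only a factor $\mu^{-p}$, bounded below on the finite boundary set, and averaging the two lower bounds for the same non-negative quantity is legitimate; for $p\le1$ subadditivity of $x\mapsto x^p$ peels off the block $\nu\le3$ at the cost of $T^p$ with $T=a_1+a_2 2^\lambda+a_3 3^\lambda\le(1+2^\lambda+3^\lambda)a_1$, which is dominated by the first diagonal term $a_1^p$ of the right-hand side, the factor $\sum_\mu\mu^{-\alpha-1}$ being finite precisely because $\alpha>0$; and the small-$n$ regimes are finite sums where the same devices give equivalence with constants depending only on $\alpha$, $\lambda$, $p$. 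What your route buys is self-containedness relative to the paper's stated lemmas (modulo Lemmas~\ref{lm:lp} and~\ref{lm:lp-converse}, which are themselves only cited); what the paper's citation presumably buys is a direct proof in~\cite{berisha:lp-conv} without the boundary bookkeeping. One cosmetic remark: since Lemma~\ref{lm:lp} requires $m<n$, the case $n=1$ of the ``easy'' halves should be noted as a trivial identity, but this is immediate.
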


The lemma is also proved in~\cite{berisha:lp-conv}.

\begin{lmm}\label{lm:anu-w}
	Let $f\in\Lp$ for a fixed~$p$ from the interval $1<p<\infty$
	and let
	\begin{displaymath}
		f(x)\sim\sum_{\nu=1}^\infty a_\nu\cos\nu x, \quad a_\nu\downarrow0.
	\end{displaymath}
	The following inequalities hold
	\begin{multline*}
		\Cn\frac1{n^k}
					\biggl(\sum_{\nu=1}^n a_\nu^p \nu^{(k+1)p-2}\biggr)^{1/p}
				+\biggl(\sum_{\nu=n+1}^\infty a_\nu^p \nu^{p-2}\biggr)^{1/p}
			\le\wpar{\frac1n}\\
		\le\Cn\frac1{n^k}
					\biggl(\sum_{\nu=1}^n a_\nu^p \nu^{(k+1)p-2}\biggr)^{1/p}
				+\biggl(\sum_{\nu=n+1}^\infty a_\nu^p \nu^{p-2}\biggr)^{1/p},
	\end{multline*}
	where constants~$\prevC$ and~$\lastC$
	do not depend on~$n$ and~$f$.
\end{lmm}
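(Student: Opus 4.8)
\emph{Proof plan.} I would realize the modulus of smoothness by Fourier partial sums and thereby reduce the asserted two-sided estimate to two separate norm computations. The external facts I rely on are the Hardy--Littlewood theorem on $L_p$ norms of series with monotone coefficients --- for $\allp$ and $c_\nu\downarrow0$ a series $g\sim\sum c_\nu\cos\nu x$ obeys $\|g\|_p^p\asymp\sum_{\nu=1}^\infty c_\nu^p\nu^{p-2}$ --- together with the uniform boundedness of the partial--sum operators $S_n$ on $\Lp$ for $\allp$ (M.~Riesz). The latter yields the realization
\[
	\wpar{\frac1n}\asymp\|f-S_nf\|_p+\frac1{n^k}\bigl\|(S_nf)^{(k)}\bigr\|_p ,
\]
so that it suffices to estimate the two summands on the right, from above and from below, by the two summands in the claim.

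The tail is immediate. Since $f-S_nf\sim\sum_{\nu>n}a_\nu\cos\nu x$ still has monotone coefficients, the Hardy--Littlewood equivalence applies verbatim and gives
\[
	\|f-S_nf\|_p\asymp\Bigl(\sum_{\nu=n+1}^\infty a_\nu^p\nu^{p-2}\Bigr)^{1/p},
\]
which is exactly the second summand of the lemma.

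The derivative term is the crux. Here $(S_nf)^{(k)}\sim\sum_{\nu=1}^n a_\nu\nu^k\cos\bigl(\nu x+\tfrac{k\pi}2\bigr)$, and I must prove
\[
	\bigl\|(S_nf)^{(k)}\bigr\|_p\asymp\Bigl(\sum_{\nu=1}^n a_\nu^p\nu^{(k+1)p-2}\Bigr)^{1/p},
\]
whence $n^{-k}\|(S_nf)^{(k)}\|_p$ reproduces the first summand. \textbf{This is the main obstacle:} the coefficients $a_\nu\nu^k$ are \emph{not} monotone, so Hardy--Littlewood is not available in both directions at once. For $2\le p<\infty$ the upper bound, and for $1<p\le2$ the lower bound, follow from the one--sided Hardy--Littlewood/Paley inequalities, which need no monotonicity; the remaining direction in each range I would obtain by exploiting that $a_\nu\downarrow$ against the weight $\nu^k$, reducing the polynomial to its tails $\sum_{\nu\ge\mu}a_\nu\cos(\cdots)$, which \emph{do} have monotone coefficients, so that Hardy--Littlewood applies to them, and then summing these contributions by means of the Hardy inequalities. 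It is precisely the converse inequalities of Lemmas~\ref{lm:lp-converse} and~\ref{lm:lp-complete}, valid for monotone sequences, that supply the direction missing from the Hardy--Littlewood/Paley inequality, while Lemma~\ref{lm:lp} handles the free direction and Lemma~\ref{lm:jensen} serves to compare the different power averagings that appear.

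Finally I would assemble the three displays: inserting the tail and derivative estimates into the realization gives the stated equivalence, the upper estimate of the lemma arising from the ``$\le$'' halves and the lower estimate from the ``$\ge$'' halves, with all constants depending only on $k$ and $p$. The routine bookkeeping to watch is the elementary relation $\sum_{\nu\le\mu}\nu^{kp-1}\asymp\mu^{kp}$ and the matching of the index ranges in Lemmas~\ref{lm:lp-converse}--\ref{lm:lp-complete} (with their shifts $4m$, $8m$), so that no boundary blocks are lost in passing between the weighted tail sums and the diagonal sum $\sum_{\nu\le n}a_\nu^p\nu^{(k+1)p-2}$.
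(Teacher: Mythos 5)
Your skeleton is sound up to the decisive step, and it is worth noting that the paper itself offers no proof to compare against: Lemma~\ref{lm:anu-w} is simply quoted from~\cite{potapov-b:publ-79}. The realization $\wpar{\frac1n}\asymp\|f-S_nf\|_p+n^{-k}\|(S_nf)^{(k)}\|_p$ is indeed valid for $\allp$ (M.~Riesz boundedness, Jackson, Bernstein--Stechkin), and applying Hardy--Littlewood to the tail $f-S_nf$, whose coefficients are still monotone, correctly produces the second summand. You also correctly isolate the crux, namely the two-sided estimate $\|(S_nf)^{(k)}\|_p\asymp\bigl(\sum_{\nu=1}^n a_\nu^p\nu^{(k+1)p-2}\bigr)^{1/p}$ with the non-monotone coefficients $a_\nu\nu^k$, and you are right that Paley's inequality gives one direction for free in each of the ranges $1<p\le2$ and $2\le p<\infty$. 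The gap is that your mechanism for the remaining direction does not close.

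Concretely, write $\nu^k=\sum_{\mu\le\nu}\delta_\mu$ with $\delta_\mu\asymp\mu^{k-1}$, so that $(S_nf)^{(k)}=\sum_{\mu\le n}\delta_\mu G_\mu$ with $G_\mu=\sum_{\nu=\mu}^n a_\nu\cos\bigl(\nu x+\tfrac{k\pi}2\bigr)$; these tails do have monotone coefficients. But for $1<p\le2$ (where you need the \emph{upper} bound) the only way to ``sum these contributions'' is the triangle inequality, and Hardy--Littlewood on each $G_\mu$ together with the converse Hardy inequalities of Lemma~\ref{lm:lp-complete} yields $\sum_{\mu\le n}\delta_\mu\|G_\mu\|_p\asymp\sum_{\mu\le n}a_\mu\mu^{k-1/p}$, an $\ell_1$-type sum that genuinely overshoots the target: for $a_\mu=\mu^{1/p-k-1}$ (monotone) one gets $\sum_{\mu\le n}a_\mu\mu^{k-1/p}\asymp\log n$, whereas $\bigl(\sum_{\mu\le n}a_\mu^p\mu^{(k+1)p-2}\bigr)^{1/p}\asymp(\log n)^{1/p}$, a loss of $(\log n)^{1-1/p}\to\infty$. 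For $2\le p<\infty$ (where you need the \emph{lower} bound) the situation is worse: lower bounds for the individual $\|G_\mu\|_p$ give no lower bound whatsoever for $\bigl\|\sum_\mu\delta_\mu G_\mu\bigr\|_p$ without an orthogonality or positivity input you do not supply. What actually rescues the argument is that the sequence $a_\nu\nu^k$ is \emph{quasi-monotone} (its quotient by $\nu^k$ decreases), and the Hardy--Littlewood equivalence extends to quasi-monotone sequences; the proof of that extension works with pointwise Abel-summation estimates of the polynomial on the intervals $\pi/(m+1)\le x<\pi/m$, reducing $\|(S_nf)^{(k)}\|_p^p$ to sums such as $\sum_m m^{-2}\bigl(\sum_{\nu\le m}a_\nu\nu^k\bigr)^p$, and only then invokes the Hardy inequalities of Lemma~\ref{lm:lp} --- i.e.\ at the level of pointwise estimates, not of norms of monotone tails. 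This (or an equivalent substitute, such as citing the quasi-monotone Hardy--Littlewood theorem) is exactly what is missing at the step you yourself flagged as the main obstacle.
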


The lemma is proved in~\cite{potapov-b:publ-79}.
\subsection{}

Now we prove our results.

\begin{proof}[Proof of Theorem~\ref{th:Np-w}]
	\setcounter{const}{0}
	Put
	\begin{displaymath}
		I_1=\int_0^\frac1{n+1} t^{-r\theta-1}\w^\theta\,dt, \quad
		I_2=\int_\frac1{n+1}^1 t^{-(r+\lambda)\theta-1}\w^\theta\,dt.
	\end{displaymath}
	We have~\cite[p.~55]{hardy-l-p:inequalities}
	\begin{multline*}
		I_1=\int_0^\frac1{n+1} t^{-r\theta-1}\w^\theta\,dt
			=\sum_{\nu=n+1}^\infty
				\int_\frac1{\nu+1}^\frac1\nu t^{-r\theta-1}\w^\theta\,dt\\
		\le\sum_{\nu=n+1}^\infty \wpar{\frac1\nu}^\theta
				\int_\frac1{\nu+1}^\frac1\nu t^{-r\theta-1}\,dt
			\le\Cn\sum_{\nu=n+1}^\infty \wpar{\frac1\nu}^\theta
				\nu^{r\theta-1}
	\end{multline*}
	and,
	taking into account properties
	of modulus of smoothness~\cite[p.~116]{timan:priblizheniya},
	\begin{displaymath}
		I_1\ge\sum_{\nu=n+1}^\infty \wpar{\frac1{\nu+1}}^\theta
				\int_\frac1{\nu+1}^\frac1\nu t^{-r\theta-1}\,dt
			\ge\Cn\sum_{\nu=n+1}^\infty \wpar{\frac1\nu}^\theta
				\nu^{r\theta-1}.
	\end{displaymath}
	
	In an analogous way we estimate
	\begin{displaymath}
		I_2\le\sum_{\nu=1}^n \wpar{\frac1\nu}^\theta
				\int_\frac1{\nu+1}^\frac1\nu t^{-(r+\lambda)\theta-1}\,dt
			\le\Cn\sum_{\nu=1}^n \wpar{\frac1\nu}^\theta
				\nu^{(r+\lambda)\theta-1}
	\end{displaymath}
	and
	\begin{displaymath}
		I_2\ge\sum_{\nu=1}^n \wpar{\frac1{\nu+1}}^\theta
				\int_\frac1{\nu+1}^\frac1\nu t^{-(r+\lambda)\theta-1}\,dt
			\ge\Cn\sum_{\nu=1}^n \wpar{\frac1\nu}^\theta
				\nu^{(r+\lambda)\theta-1}.
	\end{displaymath}
	
	Let $f\in\Np$.
	For a positive integer~$n$
	we put $\delta=\frac1{n+1}$.
	Then we have
	\begin{multline*}
		I^\theta=I_1+\delta^{\lambda\theta}I_2\\
		\ge\Cn\biggl(
			\sum_{\nu=n+1}^\infty \wpar{\frac1\nu}^\theta \nu^{r\theta-1}
			+n^{-\lambda\theta}\sum_{\nu=1}^n \wpar{\frac1\nu}^\theta
				\nu^{(r+\lambda)\theta-1}
		\biggr).
	\end{multline*}
	Hence we obtain
	\begin{multline*}
		J=\biggl(
				\sum_{\nu=n+1}^\infty \wpar{\frac1\nu}^\theta \nu^{r\theta-1}
				+n^{-\lambda\theta}\sum_{\nu=1}^n \wpar{\frac1\nu}^\theta
					\nu^{(r+\lambda)\theta-1}
			\biggr)^{1/\theta}\\
		\le\Cn I\le\Cn\varphi(\delta)=\lastC\varphi\left(\frac1{n+1}\right)
			\le\Cn\varphi\left(\frac1n\right),
	\end{multline*}
	which proves inequality~\eqref{eq:Np-w}.
	
	Now we suppose that inequality~\eqref{eq:Np-w} holds.
	For $\delta\in(0,1)$ we choose the positive integer~$n$
	satisfying $\frac1{n+1}<\delta\le\frac1n$.
	Then,
	taking into consideration the estimates from above
	for~$I_1$ and~$I_2$
	we have
	\begin{multline*}
		I^\theta=\int_0^\frac1{n+1} t^{-r\theta-1}\w^\theta\,dt
			+\int_\frac1{n+1}^\delta t^{-r\theta-1}\w^\theta\,dt\\
		+\delta^{\lambda\theta}
				\int_\delta^1 t^{-(r+\lambda)\theta-1}\w^\theta\,dt
			\le I_1+\delta^{\lambda\theta}I_2\\
		\le\Cn\biggl(
			\sum_{\nu=n+1}^\infty \wpar{\frac1\nu}^\theta \nu^{r\theta-1}
			+n^{-\lambda\theta}\sum_{\nu=1}^n \wpar{\frac1\nu}^\theta
				\nu^{(r+\lambda)\theta-1}
		\biggr).
	\end{multline*}
	Whence
	\begin{displaymath}
		I\le\Cn J\le\Cn\varphi\left(\frac1n\right)
		\le\Cn\varphi\left(\frac1{2n}\right)\le\Cn\varphi(\delta),
	\end{displaymath}
	implying $f\in\Np$.
	
	Proof of Theorem~\ref{th:Np-w} is completed.
\end{proof}

\begin{proof}[Proof of Theorem~\ref{th:Np-monotone}]
	\setcounter{const}{0}
	Theorem~\ref{th:Np-w} implies that the condition $f\in\Np$
	is equivalent to the condition
	\begin{displaymath}
		\sum_{\nu=n+1}^\infty \wpar{\frac1\nu}^\theta \nu^{r\theta-1}
			+n^{-\lambda\theta}\sum_{\nu=1}^n \wpar{\frac1\nu}^\theta
				\nu^{(r+\lambda)\theta-1}
		\le\Cn\varphi\left(\frac 1n\right)^\theta,
	\end{displaymath}
	where constant~$\lastC$ does not depend on~$n$.
	Lemma~\ref{lm:anu-w} yields that the last estimate
	is equivalent to the estimate~\cite[p.~31]{bari:trigonometricheskie}
	\begin{multline*}
		\sum_{\nu=n+1}^\infty \nu^{(r-k)\theta-1}
				\biggl(
					\sum_{\mu=1}^\nu a_\mu^p \mu^{(k+1)p-2}
				\biggr)^{\theta/p}
			+\sum_{\nu=n+1}^\infty \nu^{r\theta-1}
				\biggl(
				  \sum_{\mu=\nu}^\infty a_\mu^p \mu^{p-2}
				\biggr)^{\theta/p}\\
		+n^{-\lambda\theta}\sum_{\nu=1}^n \nu^{(r+\lambda-k)\theta-1}
				\biggl(
				  \sum_{\mu=1}^\nu a_\mu^p \mu^{(k+1)p-2}
				\biggr)^{\theta/p}\\
		+n^{-\lambda\theta}\sum_{\nu=1}^n \nu^{(r+\lambda)\theta-1}
				\biggl(
					\sum_{\mu=\nu}^\infty a_\mu^p \mu^{p-2}
				\biggr)^{\theta/p}
			\le\Cn\varphi\left(\frac 1n\right)^\theta,
	\end{multline*}
	where constant~$\lastC$ does not depend on~$n$.
	Hence,
	if we denote the terms on the left--hand side of the inequality
	by~$J_1$, $J_2$, $J_3$ and~$J_4$ respectively,
	then condition $f\in\Np$ is equivalent to the condition
	\begin{equation}\label{eq:J-phi}
		J_1+J_2+J_3+J_4\le\lastC\varphi\left(\frac 1n\right)^\theta.
	\end{equation}
	
	Now we estimate the terms~$J_1$, $J_2$, $J_3$ and~$J_4$
	from below and above
	by means of expression taking part in the condition of the theorem.
	
	First we estimate~$J_1$ and~$J_2$ from below.
	We have
	\begin{multline*}
		J_1=\sum_{\nu=n+1}^\infty \nu^{(r-k)\theta-1}
				\biggl(
				  \sum_{\mu=1}^\nu a_\mu^p \mu^{(k+1)p-2}
				\biggr)^{\theta/p}\\
		\ge\sum_{\nu=n+1}^\infty \nu^{-(k-r)\theta-1}
				\biggl(
				  \sum_{\mu=n+1}^\nu a_\mu^p \mu^{(k+1)p-2}
				\biggr)^{\theta/p}.
	\end{multline*}
	For $k-r>0$,
	making use of Lemmas~\ref{lm:lp} and~\ref{lm:lp-converse}
	we obtain
	\begin{multline}\label{eq:J1ge}
		J_1\ge\Cn\sum_{\nu=4(n+1)}^\infty \nu^{-(k-r)\theta-1}
			(a_\nu^p \nu^{(k+1)p-2 \nu})^{\theta/p}\\
		=\lastC\sum_{\nu=4(n+1)}^\infty a_\nu^\theta
			\nu^{r\theta+\theta-\theta/p-1}.
	\end{multline}
	In an analogous way,
	for $r\theta>0$ we get
	\begin{equation}\label{eq:J2ge}
		J_2=\sum_{\nu=n+1}^\infty \nu^{r\theta-1}
			\biggl(
				\sum_{\mu=\nu}^\infty a_\mu^p \mu^{p-2}
			\biggr)^{\theta/p}
		\ge\Cn\sum_{\nu=8(n+1)}^\infty a_\nu^\theta
		  \nu^{r\theta+\theta-\theta/p-1}.
	\end{equation}
	
	We estimate the term~$J_2$ from above:
	\begin{equation}\label{eq:J2le}
		J_2\le\Cn\sum_{\nu=\left[\frac{n+1}4\right]}^\infty \nu^{r\theta-1}
			(a_\nu^p \nu^{p-2 \nu})^{\theta/p}
		=\lastC\sum_{\nu=\left[\frac{n+1}4\right]}^\infty a_\nu^\theta
			\nu^{r\theta+\theta-\theta/p-1}.
	\end{equation}
	
	For~$J_1$ we have
	\begin{multline*}
		J_1\le\Cn\biggl(\sum_{\nu=n+1}^\infty \nu^{-(k-r)\theta-1}
				\biggl(
				  \sum_{\mu=n+1}^\nu a_\mu^p \mu^{(k+1)p-2}
				\biggr)^{\theta/p}\\
		+\sum_{\nu=n+1}^\infty \nu^{-(k-r)\theta-1}
				\biggl(
				  \sum_{\mu=1}^n a_\mu^p \mu^{(k+1)p-2}
				\biggr)^{\theta/p}\biggr),
	\end{multline*}
	and applying once more Lemmas~\ref{lm:lp} and~\ref{lm:lp-converse}
	we obtain
	\begin{equation}\label{eq:J1}
		J_1\le\Cn\sum_{\nu=\left[\frac{n+1}4\right]}^\infty a_\nu^\theta
				\nu^{r\theta+\theta-\theta/p-1}
			+n^{-(k-r)\theta}
				\biggl(
					\sum_{\mu=1}^n a_\mu^p \mu^{(k+1)p-2}
				\biggr)^{\theta/p}.
	\end{equation}
	Put
	\begin{displaymath}
		I_1=n^{-(k-r)\theta}\sum_{\mu=1}^n a_\mu^p \mu^{(k+1)p-2}.
	\end{displaymath}
	Then for
	\begin{displaymath}
		I_2=I_1n^{(k-r)\theta},
	\end{displaymath}
	taking into account that $(k+1)p-2\ge0$ and $a_\nu\downarrow0$
	we get
	\begin{multline*}
		I_2=\sum_{\mu=1}^n a_\mu^p \mu^{(k+1)p-2}
		\le\sum_{\mu=1}^{\left[\frac n2\right]}
				a_\mu^p \mu^{(k+1)p-2}
			+a_{\left[\frac n2\right]+1}^p
				\sum_{\mu=\left[\frac n2\right]+1}^n \mu^{(k+1)p-2}\\
		\le\sum_{\mu=1}^{\left[\frac n2\right]}
				a_\mu^p \mu^{(k+1)p-2}
			+\Cn n^{(k+1)p-1}a_{\left[\frac n2\right]+1}^p
		\le\Cn\sum_{\mu=1}^{\left[\frac n2\right]} a_\mu^p \mu^{(k+1)p-2}.
	\end{multline*}
	Since $k-r-\lambda>0$,
	we have
	\begin{multline*}
		I_1^{\theta/p}\le\Cn n^{-(k-r)\theta}
			\biggl(
				\sum_{\mu=1}^{\left[\frac n2\right]} a_\mu^p \mu^{(k+1)p-2}
			\biggr)^{\theta/p}\\
		\le\Cn n^{-\lambda\theta}\sum_{\nu=\left[\frac n2\right]}^n
			\nu^{-(k-r-\lambda)\theta-1}
			\biggl(
				\sum_{\mu=1}^\nu a_\mu^p \mu^{(k+1)p-2}
			\biggr)^{\theta/p}\\
		\le\lastC n^{-\lambda\theta}\sum_{\nu=1}^n
			\nu^{-(k-r-\lambda)\theta-1}
			\biggl(
				\sum_{\mu=1}^\nu a_\mu^p \mu^{(k+1)p-2}
			\biggr)^{\theta/p}.
	\end{multline*}
	Applying Lemma~\ref{lm:lp-complete} we obtain
	\begin{multline*}
		I_1^{\theta/p}\le\Cn n^{-\lambda\theta}
			\sum_{\nu=1}^n	\nu^{-(k-r-\lambda)\theta-1}
			(a_\nu^p \nu^{(k+1)p-2} \nu)^{\theta/p}\\
		=\lastC n^{-\lambda\theta} \sum_{\nu=1}^n a_\nu^\theta
			\nu^{(r+\lambda)\theta+\theta-\theta/p-1}.
	\end{multline*}
	From~\eqref{eq:J1} it follows that
	\begin{equation}\label{eq:J1le}
		J_1\le\Cn\biggl(
			\sum_{\nu=\left[\frac{n+1}4\right]}^\infty a_\nu^\theta
					\nu^{r\theta+\theta-\theta/p-1}
				+n^{-\lambda\theta}\sum_{\nu=1}^n a_\nu^\theta
					\nu^{(r+\lambda)\theta+\theta-\theta/p-1}
		\biggr).
	\end{equation}
	
	This way,
	inequalities~\eqref{eq:J1ge}, \eqref{eq:J2ge},
	\eqref{eq:J2le} and~\eqref{eq:J1le}
	yield
	\begin{multline}\label{eq:J1-J2}
		\Cn\sum_{\nu=8(n+1)}^\infty
				a_\nu^\theta
				\nu^{r\theta+\theta-\theta/p-1}
			\le J_1+J_2\\
		\le\Cn\biggl(
				\sum_{\nu=\left[\frac{n+1}4\right]}^\infty
					a_\nu^\theta
					\nu^{r\theta+\theta-\theta/p-1}
				+n^{-\lambda\theta}\sum_{\nu=1}^n a_\nu^\theta
					\nu^{(r+\lambda)\theta+\theta-\theta/p-1}
			\biggr).
	\end{multline}
	
	Now we estimate~$J_3$ and~$J_4$. Put
	\begin{displaymath}
		A_1=n^{\lambda\theta}J_3
		=\sum_{\nu=1}^n \nu^{(r+\lambda-k)\theta-1}
			\biggl(
				\sum_{\mu=1}^\nu a_\mu^p \mu^{(k+1)p-2}
			\biggr)^{\theta/p}
	\end{displaymath}
	and
	\begin{displaymath}
		A_2
		=n^{\lambda\theta}J_4=\sum_{\nu=1}^n \nu^{(r+\lambda)\theta-1}
			\biggl(
				\sum_{\mu=\nu}^\infty a_\mu^p \mu^{p-2}
			\biggr)^{\theta/p},
	\end{displaymath}
	applying Lemma~\ref{lm:lp-complete} for $r+\lambda-k<0$
	we get
	\begin{equation}\label{eq:A1}
		A_1
		\le\Cn\sum_{\nu=1}^n a_\nu^\theta
			\nu^{(r+\lambda)\theta+\theta-\theta/p-1}.
	\end{equation}
	
	We estimate~$A_2$ in an analogous way:
	\begin{multline}\label{eq:A2}
		A_2\le\Cn\biggl(
				\sum_{\nu=1}^n \nu^{(r+\lambda)\theta-1}
					\biggl(
					  \sum_{\mu=\nu}^n a_\mu^p \mu^{p-2}
					\biggr)^{\theta/p}\\
		+\sum_{\nu=1}^n \nu^{(r+\lambda)\theta-1}
				\biggl(
				  \sum_{\mu=n+1}^\infty a_\mu^p \mu^{p-2}
				\biggr)^{\theta/p}
			\biggr)\\
		\le\Cn\biggl(
				\sum_{\nu=1}^n a_\nu^\theta
					\nu^{(r+\lambda)\theta+\theta-\theta/p-1}
				+n^{(r+\lambda)\theta}
					\biggl(
					  \sum_{\mu=n+1}^\infty a_\mu^p \mu^{p-2}
					\biggr)^{\theta/p}
			\biggr).
	\end{multline}
	
	We estimate the series
	\begin{displaymath}
		B=\biggl(\sum_{\mu=n+1}^\infty a_\mu^p \mu^{p-2}\biggr)^{\theta/p}.
	\end{displaymath}
	
	First let $\frac\theta p>1$.
	Applying H\"older inequality we have
	\begin{multline*}
		\sum_{\mu=n+1}^\infty
			a_\mu^p \mu^{p-2}
		\le\biggl(
			\sum_{\mu=n+1}^\infty(a_\mu^p \mu^{p-1+rp-p/\theta})^{\theta/p}
		\biggr)^{p/\theta}\\
		\times\biggl(
			\sum_{\mu=n+1}^\infty\bigl(
				\mu^{-(rp-p/\theta+1)\theta/(\theta-p)}
		\biggr)^{(\theta-p)/\theta}.
	\end{multline*}
	Since
	$\bigl(rp-\frac p\theta+1\bigr)\frac\theta{\theta-p}
	=rp\frac\theta{\theta-p}+1>1$,
	we get
	\begin{displaymath}
		\sum_{\mu=n+1}^\infty a_\mu^p \mu^{p-2}
		\le\Cn n^{-rp}\biggl(
			\sum_{\mu=n+1}^\infty a_\mu^\theta
					\mu^{\theta-\theta/p+r\theta-1}
		\biggr)^{p/\theta}.
	\end{displaymath}
	So, for $\frac\theta p>1$ we have proved that
	\begin{displaymath}
		B\le\Cn n^{-r\theta}\sum_{\mu=n+1}^\infty a_\mu^\theta
			\mu^{r\theta+\theta-\theta/p-1}.
	\end{displaymath}
	
	Let $\frac\theta p\le1$.
	For given~$n$ we choose the positive integer~$N$
	such that $2^N\le n+1<2^{N+1}$.
	Then we have
	\begin{multline*}
		B\le\biggl(\sum_{\mu=2^N}^\infty a_\mu^p \mu^{p-2}\biggr)^{\theta/p}
			\le\biggl(
				\sum_{\nu=N}^\infty a_{2^\nu}^p
				\sum_{\mu=2^\nu}^{2^{\nu+1}-1}\mu^{p-2}
			\biggr)^{\theta/p}\\
		\le\Cn\biggl(
			\sum_{\nu=N}^\infty a_{2^\nu}^p 2^{\nu(p-1)}
		\biggr)^{\theta/p}.
	\end{multline*}
	Making use of Lemma~\ref{lm:jensen}
	we obtain
	\begin{multline*}
		B\le\lastC\sum_{\nu=N}^\infty a_{2^\nu}^\theta
			2^{\nu(\theta-\theta/p)}
		\le\Cn\sum_{\nu=N}^\infty \sum_{\mu=2^{\nu-1}}^{2^\nu-1}
			a_\mu^\theta \mu^{\theta-\theta/p-1}\\
		=\lastC\sum_{\nu=2^{N-1}}^\infty a_\nu^\theta
			\nu^{\theta-\theta/p-1}
		\le\lastC\sum_{\nu=\left[\frac{n+1}4\right]}^\infty
			a_\nu^\theta \nu^{\theta-\theta/p-1}\\
		\le\lastC\left[\frac{n+1}4\right]^{-r\theta}
			\sum_{\nu=\left[\frac{n+1}4\right]}^\infty a_\nu^\theta
			\nu^{r\theta+\theta-\theta/p-1}.
	\end{multline*}
	Since for $n\ge3$ holds $\left[\frac{n+1}4\right]\ge\frac n{12}$,
	we get
	\begin{displaymath}
		B\le\Cn n^{-r\theta}
			\sum_{\nu=\left[\frac{n+1}4\right]}^\infty a_\nu^\theta
			\nu^{r\theta+\theta-\theta/p-1}.
	\end{displaymath}
	
	This way, for $0<\frac\theta p<\infty$ we proved that
	\begin{displaymath}
		B\le\Cn n^{-r\theta}
			\sum_{\nu=\left[\frac{n+1}4\right]}^\infty a_\nu^\theta
			\nu^{r\theta+\theta-\theta/p-1}.
	\end{displaymath}
	Hence~\eqref{eq:A2} yields
	\begin{displaymath}
		A_2
		\le\Cn\biggl(
			\sum_{\nu=1}^n a_\nu^\theta
				\nu^{(r+\lambda)\theta+\theta-\theta/p-1}
			+n^{\lambda\theta}\sum_{\nu=\left[\frac{n+1}4\right]}^\infty
				a_\nu^\theta \nu^{r\theta+\theta-\theta/p-1}
		\biggr).
	\end{displaymath}
	
	Now, from~\eqref{eq:A1} it follows that
	\begin{multline}\label{eq:J3-J4}
		J_3+J_4=n^{-\lambda\theta}(A_1+A_2)\\
		\le\Cn\biggl(
				n^{-\lambda\theta}\sum_{\nu=1}^n a_\nu^\theta
					\nu^{(r+\lambda)\theta+\theta-\theta/p-1}
				+\sum_{\nu=\left[\frac{n+1}4\right]}^\infty
					a_\nu^\theta \nu^{r\theta+\theta-\theta/p-1}
		\biggr).
	\end{multline}
	
	Further, we estimate the series
	\begin{displaymath}
		A_3=\sum_{\nu=\left[\frac{n+1}4\right]}^\infty
			a_\nu^\theta \nu^{r\theta+\theta-\theta/p-1}
		=A_4+\sum_{\nu=n+1}^\infty	a_\nu^\theta
			\nu^{r\theta+\theta-\theta/p-1},
	\end{displaymath}
	where is
	\begin{multline*}
		A_4=\sum_{\nu=\left[\frac{n+1}4\right]}^n
				a_\nu^\theta \nu^{r\theta+\theta-\theta/p-1}
			\le\Cn a_{\left[\frac{n+1}4\right]}^\theta
				n^{r\theta+\theta-\theta/p}\\
		\le\Cn n^{-\lambda\theta}
				\sum_{\nu=1}^{\left[\frac{n+1}4\right]} a_\nu^\theta
					\nu^{(r+\lambda)\theta+\theta-\theta/p-1}
		\le\lastC n^{-\lambda\theta}
				\sum_{\nu=1}^n a_\nu^\theta
					\nu^{(r+\lambda)\theta+\theta-\theta/p-1}.
	\end{multline*}
	Whence
	\begin{equation}\label{eq:A3}
		A_3\le\Cn\biggl(
			n^{-\lambda\theta}\sum_{\nu=1}^n a_\nu^\theta
				\nu^{(r+\lambda)\theta+\theta-\theta/p-1}
			+\sum_{\nu=n+1}^\infty	a_\nu^\theta
				\nu^{r\theta+\theta-\theta/p-1}
		\biggr).
	\end{equation}
	
	Making use of~\eqref{eq:A3} and~\eqref{eq:J3-J4}
	we have
	\begin{displaymath}
		J_3+J_4\le\Cn\biggl(
			n^{-\lambda\theta}\sum_{\nu=1}^n a_\nu^\theta
				\nu^{(r+\lambda)\theta+\theta-\theta/p-1}
			+\sum_{\nu=n+1}^\infty	a_\nu^\theta
				\nu^{r\theta+\theta-\theta/p-1}
		\biggr).
	\end{displaymath}
	Hence, applying~\eqref{eq:A3} in~\eqref{eq:J1-J2}
	we obtain
	\begin{multline}\label{eq:J1-J2-J3-J4le}
		J_1+J_2+J_3+J_4\\
		\le\Cn\biggl(
			n^{-\lambda\theta}\sum_{\nu=1}^n a_\nu^\theta
				\nu^{(r+\lambda)\theta+\theta-\theta/p-1}
			+\sum_{\nu=n+1}^\infty	a_\nu^\theta
				\nu^{r\theta+\theta-\theta/p-1}
		\biggr).
	\end{multline}
	
	Now we estimate~$A_1$ and~$A_2$ from below.
	Making use of Lemma~\ref{lm:lp-complete}
	we get
	\begin{displaymath}
		A_1\ge\Cn\sum_{\nu=1}^n a_\nu^\theta
			\nu^{(r+\lambda)\theta+\theta-\theta/p-1},
	\end{displaymath}
	and in an analogous way
	\begin{displaymath}
		A_2\ge\sum_{\nu=1}^n \nu^{(r+\lambda)\theta-1}
			\biggl(
				\sum_{\mu=\nu}^n a_\mu^p \mu^{p-2}
			\biggr)^{\theta/p}
		\ge\Cn\sum_{\nu=1}^n a_\nu^\theta
			\nu^{(r+\lambda)\theta+\theta-\theta/p-1}.
	\end{displaymath}
	Hence
	\begin{displaymath}
		A_1+A_2\ge\Cn\sum_{\nu=1}^n a_\nu^\theta
			\nu^{(r+\lambda)\theta+\theta-\theta/p-1}.
	\end{displaymath}
	This way the following inequality holds
	\begin{displaymath}
		J_3+J_4\ge\Cn n^{-\lambda\theta}\sum_{\nu=1}^n a_\nu^\theta
			\nu^{(r+\lambda)\theta+\theta-\theta/p-1}.
	\end{displaymath}
	
	From~\eqref{eq:J1-J2} it follows that
	\begin{multline}\label{eq:J1-J2-J3-J4ge}
		J_1+J_2+J_3+J_4\\
		\ge\Cn\biggl(
			\sum_{\nu=8(n+1)}^\infty a_\nu^\theta
				\nu^{r\theta+\theta-\theta/p-1}
			+n^{-\lambda\theta}\sum_{\nu=1}^n a_\nu^\theta
				\nu^{(r+\lambda)\theta+\theta-\theta/p-1}
		\biggr).
	\end{multline}
	Since
	\begin{multline*}
		\sum_{\nu=n+1}^{\nu=8(n+1)-1} a_\nu^\theta
				\nu^{r\theta+\theta-\theta/p-1}
			\le\Cn a_n^\theta	n^{r\theta+\theta-\theta/p}\\
		\le\Cn n^{-\lambda\theta}\sum_{\nu=1}^n a_\nu^\theta
				\nu^{(r+\lambda)\theta+\theta-\theta/p-1}
	\end{multline*}
	holds,
	we have
	\begin{multline*}
		\sum_{\nu=n+1}^\infty a_\nu^\theta
				\nu^{r\theta+\theta-\theta/p-1}
			+n^{-\lambda\theta}\sum_{\nu=1}^n a_\nu^\theta
				\nu^{(r+\lambda)\theta+\theta-\theta/p-1}\\
		\le\Cn\biggl(
			\sum_{\nu=8(n+1)}^\infty a_\nu^\theta
				\nu^{r\theta+\theta-\theta/p-1}
			+n^{-\lambda\theta}\sum_{\nu=1}^n a_\nu^\theta
				\nu^{(r+\lambda)\theta+\theta-\theta/p-1}
		\biggr).
	\end{multline*}
	
	Now,
	estimates~\eqref{eq:J1-J2-J3-J4ge} and~\eqref{eq:J1-J2-J3-J4le}
	imply
	\begin{multline*}
		\Cn\biggl(
			\sum_{\nu=n+1}^\infty a_\nu^\theta
				\nu^{r\theta+\theta-\theta/p-1}
			+n^{-\lambda\theta}\sum_{\nu=1}^n a_\nu^\theta
				\nu^{(r+\lambda)\theta+\theta-\theta/p-1}
		\biggr)\\
		\le J_1+J_2+J_3+J_4\\
		\le\Cn\biggl(
			\sum_{\nu=n+1}^\infty a_\nu^\theta
				\nu^{r\theta+\theta-\theta/p-1}
			+n^{-\lambda\theta}\sum_{\nu=1}^n a_\nu^\theta
				\nu^{(r+\lambda)\theta+\theta-\theta/p-1}
		\biggr).
	\end{multline*}
	
	This way we proved that condition~\eqref{eq:Np-w}
	is equivalent to the condition of the theorem.
	Since condition~\eqref{eq:Np-w}
	is equivalent to the condition $f\in\Np$,
	proof of Theorem~\ref{th:Np-monotone} is completed.
\end{proof}

\bibliographystyle{amsplain}
\bibliography{maths,toappear}

\end{document}